\documentclass{amsart}

\usepackage{amsmath,amsfonts,amssymb}
\usepackage[latin1]{inputenc}
\usepackage{amscd}
\usepackage{latexsym}
\usepackage{bm}
\usepackage[all,cmtip]{xy}
\usepackage{euscript}
\usepackage{dsfont}
\usepackage{graphics}
\usepackage{mathrsfs}
\usepackage{graphicx}
\usepackage{tikz}

\newcommand{\N}{\mathbb{N}}
\newcommand{\R}{\mathbb{R}}

\newcommand{\di}{\displaystyle}
\newcommand{\T}{\mathbb{T}}

\usepackage{amsmath,amssymb,graphicx,mathrsfs,mathtools}
\usepackage[all]{xy}
\newcommand{\C}{\mathbf{C}}
\newcommand{\Q}{\mathbf{Q}}
\renewcommand{\P}{\mathscr{P}}
\renewcommand{\L}{\mathscr{L}}
\renewcommand{\l}{\ell}
\newcommand{\w}{{\rm w}}
\newcommand{\emb}{{\rm Emb}}
\newtheorem{theorem}{Theorem}
\newtheorem{definition}{Definition}
\newtheorem{lemma}{Lemma}

\begin{document}
\title[High-order discrete differential and integral calculus]{High-order discrete differential and integral calculus and Galerkin variational integrators}

\author{Jacky Cresson}
\address{Laboratoire de math\'ematiques et leurs applications, UMR CNRS 5142, Universit\'e de Pau et des Pays de l'Adour-E2S.}

\author{Khaled Hariz Belgacem}
\address{Department of Mathematics, University of Paderborn, Warburger Strasse 100, 33098 Paderborn, Germany.}

\author{Anna Szafranska}
\address{Institute of Applied Mathematics, Gdansk University of Technology, G. Narutowicz Street 11/12, 80-233 Gdansk, Poland.}

\begin{abstract}
We define high-order discrete differential and integral calculus following the discrete embedding formalism strategy. Using this framework, we define high-order discrete Lagrangian functionals and the corresponding calculus of variations. This provides a new derivation of Galerkin variational integrators.     
\end{abstract}

\maketitle

\vskip 2mm
{\it Keywords}: Lagrangian systems, Variational integrators, Geometric integration, Galerkin methods, High-order integrators, Discrete embedding formalisms.

\tableofcontents

\section{Introduction}

Variational integrators are a class of geometric numerical integrators constructed using a discrete version of the Hamilton's principle. A comprehensive review is contained in the seminal article of Marsden and West \cite{marsden} and the classical book of Hairer and al. \cite{hairer}. A natural problem is to design high order variational integrators called {\it Galerkin variational integrators}. This was done in particular by Marsden and West \cite{marsden} and Leok \cite{leok1,leok2}. The convergence of these integrators is studied by S. Ober-Bl\"obaum and M. Vermeeren in \cite{ober}.\\

All the previous work deal with a derivation of the variational integrators initiated by Marsden and West \cite{marsden}. This derivation was analysed in \cite{rouba} and \cite{cr2}. Following the discrete embedding strategy \cite{cr1,cr2}, we propose to derive these integrators thanks to the definition of a discrete differential and integral calculus. However, this method was up to now restricted to order $1$ methods \cite{cr2,rouba} and extended to mid-point variational integrators in \cite{rouba2}.\\

In this paper, we develop a high-order discrete differential and integral calculus. This formalism allows us to define a natural discrete analogue of Lagrangian functionals. We then characterize the set of critical points of these functionals. This framework gives an alternative derivation of the Galerkin variational integrators.

\section{Embedding map on $\P^{m}_{\T} ([a,b])$}

 Let $N\in \mathbb{N}^*$, $a,b \in \R$ with $a < b$. We denote by $\T$ a uniform discrete time-scale defined by $\T =\{t_i \}_{i=0,\ldots, N}$ with $t_0 =a$ and $t_N =b$, $h=(b-a)/N$, $t_i =a+i h$, $i=0,\dots ,N$. As usual, we define $\T^-=\T\setminus \{a\}$, $\T^+=\T\setminus \{b\}$ and $\T^{\pm}=\T^+ \cap \T^- $. \\

Let $m\in \N^*$ be fixed. We denote by $\P^{m}_{\T} ([a,b])$ the set of piecewise continuous functions which are polynomials of degree $m$ over each interval $[t_i,t_{i+1} ]$, $i=0,\ldots,N-1$.\\

Let $\C =\{ c_1 ,\dots ,c_{m-1} \}$ be a discrete time-scale on $]0,1[$ such that $0<c_1 <\dots <c_{m-1} <1$. For all $i=0,\dots ,N-1$, we denote by $(\T )_{\C ,i}$ the discrete time-scale over $]t_i ,t_{i+1} [$ defined by $(\T )_{\C,i} =\{ c_{i,j} =t_i +c_i h \}_{j=1,\dots ,m-1}$. We denote by $\bar{\T}_{\C}$ the time-scale on $[a,b]$ defined by $\bar{\T}_{\C} =\T \cup (\T )_{\C}$ where $(\T)_{\C} =\di\sum_{i=1}^{m-1} (\T)_{\C,i}$,\\

Let $X\in C(\T ,\R )$. A function $\bar{X}\in C( \bar{\T}_{\C} ,\R )$ is called an extension of $X$ with respect to $\C$ if $\bar{X}|_{\T} :=X$. The function $X_C:= \bar{X}|_C$ is called the {\it control} function of $\bar{X}$.\\ 

For $i=0,\dots ,N-1$, we denote by $\bar{\mathbf{\l}}_i (t)= (\mathbf{\l}_i (t) ,\mathbf{\l}_{C,i} (t))$ the vector defined by 
\begin{equation}
\mathbf{\l}_i (t)=(\l_{i,0} (t) ,\l_{i,m} (t) ),\ \ \mathbf{\l}_{C,i} (t) =(\l_{i,1} (t),\dots ,\l_{i,m-1} (t)) ,
\end{equation}
where $\l_{i,0},\dots ,\l_{i,m}$ are the {\it Lagrange polynomials} over $(\T )_{C,i} \cup \{ t_{i+1} ,t_i \}$. \\

We also denote by $\mathbf{\bar{X}}_i =(\mathbf{X}_i ,\mathbf{X}_{C,i} )$ the vector defined by
\begin{equation}
\mathbf{X}_i = (X(t_i ), X (t_{i+1} )),\ \ \mathbf{X}_{C,i} =(\bar{X} (c_{i,1} ) ,\dots , \bar{X} (c_{i,m-1} ) ) ,
\end{equation}
$i=0,\dots ,N-1$.\\

The embedding map $\emb_m :C(\bar{\T}_{\C} ,\R ) \rightarrow \P^m_{\T} ([a,b])$ is defined for all $t\in [t_i ,t_{i+1}]$ by
\begin{equation}
\label{interdef}
    \emb_m (\bar{X} ) (t) := \langle \mathbf{\bar{X}}_i ,\bar{\mathbf{\l}}_i (t) \rangle , 
\end{equation}
where $\langle \cdot ,\cdot \rangle$ is the usual scalar product on $\R^{m+1}$,\\ 

We denote by $\bar{x} \in \P^m_{\T} ([a,b])$ the function
\begin{equation}
    \bar{x} = \emb_m (\bar{X} ) ,
\end{equation}
Denoting  by $x$ and $x_C$ the functions 
\begin{equation}
    x:= \langle \mathbf{X}_i ,\mathbf{\l}_i (t) \rangle\ {\rm and}\ x_C :=\langle \mathbf{X}_{C,i} ,\mathbf{\l}_{C,i} (t) \rangle ,
\end{equation}
we obtain
\begin{equation}
\label{decomp}
\bar{x} =x+x_C .
\end{equation}

By construction, the embedding map $\emb_m$ is a {\it linear} mapping on $C(\bar{\T}_{\C} ,\R )$, i.e. for all $\alpha\in \R$, $\bar{X}$ and $\bar{Y}$ in $C(\bar{\T}_{\C} ,\R )$ we have 
\begin{equation}
    \emb_m (\alpha \bar{X} +\bar{Y} ) =\alpha \emb_m (\bar{X} ) +\emb_m (\bar{Y} ) ,
\end{equation}
or equivalently denoting by $\overline{x+y}$ the embedding of $\bar{X} +\bar{Y} $, we have
\begin{equation}
\label{lineinter}
    \overline{x+y} =\bar{x} +\bar{y} ,
\end{equation}

\section{High-order discrete differential and integral calculus}

\subsection{High-order discrete derivative} 

The embedding map of order $m$ can be used to defined for all $\bar{X}\in C(\bar{\T}_{\C} ,\R )$ a discrete derivative of order $m$:

\begin{definition}[Discrete derivative of order $m$]
For all $\bar{X} \in C(\bar{\T}_{\C} ,\R )$ ,we denote by $\Delta_m : C(\bar{\T}_{\C} ,\R ) \rightarrow C(\bar{T}_{\C}^+ ,\R )$ the mapping defined by
\begin{equation}
    \Delta_m (\bar{X} ) := \pi_{\T^+ } \left ( \di\frac{d^+}{dt} (\bar{x} ) \right ) ,
\end{equation}
where $\Pi_{\T^+} \left ( \di\frac{d^+}{dt} (\bar{x} ) \right )$ is the restriction of $\left ( \di\frac{d^+}{dt} (\bar{x} ) \right )$ to $\T^+$.
\end{definition}

For $m=1$, $\C =\emptyset$ so that $\bar{\T}_{\C} =\T$, $\bar{X} =X$ and $\Delta_1$ is the forward discrete derivative given by $\Delta_1 (X ) (t_i ) =\di\frac{1}{h} (X(t_{i+1} )-X(t_i ) )$, $i=0,\dots ,N-1$.\\ 

The linearity of $\emb_m$ implies that the operator $\Delta_m$ is a linear operator on $C(\bar{\T}_{\C} ,\R )$, i.e. for all $\alpha\in \R$, $\bar{X}$ and $\bar{Y}$ in $C(\bar{\T}_{\C} ,\R)$, we have 
\begin{equation}
    \Delta_m (\alpha \bar{X} +\bar{Y} )=\alpha \Delta_m (\bar{X})+\Delta_m (\bar{Y}) .
\end{equation}

\subsection{High-order discrete anti-derivatives}

We denote by $\Q =\{ q_0 ,q_1 ,\dots ,q_{\l } \}$ be a discrete time-scale of $[0,1]$ such that $q_0=0$, $q_{\l} =1$, $0<q_1 <\dots <q_{\l-1} <1$. A {\it weight} function on $\Q$ is a function $\w :\Q \rightarrow \R$ such that 
\begin{equation}
    \di\sum_{i=0}^{\l} \w (q_i )=1.
\end{equation}
The couple $(\Q ,\w )$ is called a {\it quadrature}.\\

Let $\Q$ be given. For all $i=0,\dots ,N-1$, we denote by $(\T )_{\Q}$ the discrete time-scale on $]t_i,t_{i+1}[$ defined by $(\T )_{\Q,i} =\{ q_{i,j}=t_i +q_i h \}_{j=1,\dots ,\l-1}$. We denote $\bar{\T}_{\Q}$ the time-scale on $[a,b]$ defined by $\bar{\T }_{\Q} =\T \cup (\T )_{\Q}$ where $(\T )_{\Q} :=\di\bigcup_{i=1}^{\l-1} (\T)_{\Q,i}$ \\

The construction of the anti-derivative is done as follows: 

\begin{definition}[Anti-derivative of order $q$] Let $\bar{f} \in C(\bar{\T}_{\Q} ;\R )$, and $f:=\pi (\bar{f} ) \in C(\T ,\R )$. The anti-derivative of $\bar{f}$ with respect to the quadrature $(\Q ,\w )$ is defined for all $t_i \in \T$, $i=0,\dots ,N$, by 
\begin{equation}
    \di\int_a^{t_i} \bar{f} (t)\, \Delta_{\w } t := \di\sum_{k=0}^{i-1} \left ( \w (0) f(t_k) +\w (1)f(t_{k+1} )  +\di\sum_{j=1}^{\ell -1} \w (q_j ) \bar{f} (q_{k,j} ) \right ) .
\end{equation}
\end{definition}

The anti-derivative of order $\ell$ is a linear operator, i.e. for all $\alpha \in \R$, $\bar{f}$ and $\bar{g}$ in $C(\T_{\Q} ,\R )$, and all $t_i \in \T$, we have 
\begin{equation}
    \di\int_a^{t_i}  \left ( \alpha \bar{f} +\bar{g} \right ) \, \Delta_{\w} t =
    \alpha \di\int_a^{t_i} \bar{f}\, \Delta_{\w} t +\di\int_a^{t_i} \, \bar{g}\, \Delta_{\w}t. 
\end{equation}

\subsection{A discrete Du Bois-Reymond lemma}

We derive in our framework, a discrete version of the Du Bois-Reymond lemmma of  calculus of variations (see \cite{gelfand}).\\

We denote by $C_{(0)} (\bar{\T}_{\C} ,\R )$ be a subset of  $C (\bar{\T}_{\C}, \R )$ defined by  
\begin{equation}
C_{(0)} (\bar{\T}_{\C} ,\R ) =\left\{ \bar{Y}\in C (\bar{\T}_{\C} ,\R ) ,\  \bar{Y}(a)=\bar{Y}(b)=0 \right\}.
\end{equation}

\begin{lemma}[Discrete Du Bois-Reymond Lemma]
\label{dubois}
Let $\bar{f}$ and $\bar{g}$ be in $C(\bar{T}_{\Q} ,\R )$ such that 
\begin{equation}
    \di\int_a^b \left ( \bar{f}(t) \bar{y} (t) +\bar{g} (t)\di\frac{d^+}{dt} (\bar{y} ) (t)\right ) \, \Delta_{\w} t =0 ,
\end{equation}
for all $\bar{Y} \in C_{(0)} (\bar{\T}_{\C} ,\R )$, $\bar{y}=\emb_m (\bar{Y} ) $. Then, we have for all $i=1,\dots ,N-1$:
\begin{equation}
\label{cond1}
\left .
\begin{array}{l}
\di\int_{t_{i-1}}^{t_i} \left ( 
    \bar{f} (t) \l_{i-1,m} (t)  
    +\bar{g} (t) \di\frac{d^+}{dt} (\l_{i-1 ,m}) 
    (t)\right ) \, \Delta_{\w} t \\
    +
    \di\int_{t_i}^{t_{i+1}} \left ( 
    \bar{f}(t) \l_{i,0} (t)  
    +\bar{g}(t)\di\frac{d^+}{dt} (\l_{i,0}) (t)
    \right ) \, \Delta_{\w} t 
    =0 .
\end{array}
\right .
\end{equation}
and
\begin{equation}
\label{cond2}
    \di\int_{t_i}^{t_{i+1}} \left ( 
    \bar{f}(t) \l_{i,j} (t)  
    +\bar{g}(t) \l'_{i,j} (t)  
    \right ) \, \Delta_{\w} t =0 ,\ \ j=1,\dots ,m-1.
\end{equation}
\end{lemma}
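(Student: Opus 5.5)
The plan is to test the hypothesis against the simplest admissible variations: the ``hat'' and ``bubble'' functions of $C_{(0)}(\bar{\T}_{\C},\R)$, i.e. the discrete analogues of the standard nodal basis. Write an arbitrary $\bar{Y}\in C_{(0)}(\bar{\T}_{\C},\R)$ in terms of its values on $\bar{\T}_{\C}$. By linearity of $\emb_m$, relation (\ref{lineinter}), and the decomposition (\ref{decomp}), on each $[t_i,t_{i+1}]$ we have
\[
\bar{y}(t)=Y(t_i)\l_{i,0}(t)+Y(t_{i+1})\l_{i,m}(t)+\sum_{j=1}^{m-1}\bar{Y}(c_{i,j})\l_{i,j}(t).
\]
Since $\frac{d^+}{dt}$ is linear and the anti-derivative $\int \cdot\,\Delta_{\w}t$ is linear, the functional $\bar{Y}\mapsto \int_a^b(\bar{f}\bar{y}+\bar{g}\,\frac{d^+}{dt}\bar{y})\,\Delta_{\w}t$ is a linear form in the free values $Y(t_1),\dots,Y(t_{N-1})$ and $\bar{Y}(c_{i,j})$. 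The endpoint values $Y(t_0)=Y(t_N)=0$ are excluded. A linear form vanishing identically has all its coefficients zero, so it suffices to compute these coefficients.

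I would also record that the anti-derivative over $[a,b]$ splits additively as $\sum_{k=0}^{N-1}\int_{t_k}^{t_{k+1}}$. This follows directly from the definition as a sum over $k$, with $\int_{t_k}^{t_{k+1}}$ meaning the $k$-th summand.

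First, take $\bar{Y}$ equal to $1$ at $c_{i,j}$ and zero at every other point of $\bar{\T}_{\C}$. Then $\bar{y}=\l_{i,j}$ on $[t_i,t_{i+1}]$ and $\bar{y}\equiv 0$ elsewhere. Because $\l_{i,j}$ is a polynomial on this interval, $\frac{d^+}{dt}\bar{y}=\l'_{i,j}$ at every quadrature node $t_i$ and $q_{i,j'}$ lying in $[t_i,t_{i+1})$. Only the $k=i$ summand survives, and this gives (\ref{cond2}).

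Second, take $\bar{Y}$ equal to $1$ at the node $t_i$, $1\le i\le N-1$, and zero at every other point. Then $\bar{y}=\l_{i-1,m}$ on $[t_{i-1},t_i]$, $\bar{y}=\l_{i,0}$ on $[t_i,t_{i+1}]$, and $\bar{y}=0$ elsewhere. This function is continuous at $t_i$ but its derivative is discontinuous there. Only the summands $k=i-1$ and $k=i$ survive, and this yields (\ref{cond1}).

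The delicate step is the interpretation of $\frac{d^+}{dt}(\bar{y})$ at the right endpoint node $t_{k+1}$ of a subinterval, where the weight $\w(1)$ enters. The forward derivative at $t_i$ of the hat function is $\l'_{i,0}(t_i)$, not $\l'_{i-1,m}(t_i)$. The point is that, in the $k=i-1$ summand, the term $\w(1)\,g(t_i)\,\frac{d^+}{dt}\bar{y}(t_i)$ must be read with the same convention as $\int_{t_{i-1}}^{t_i}\bar{g}\,\frac{d^+}{dt}(\l_{i-1,m})\,\Delta_{\w}t$ in (\ref{cond1}). That is, $\frac{d^+}{dt}(\l_{i-1,m})$ is understood as the derivative of the function $\l_{i-1,m}$ extended by zero, evaluated in the same way on both sides. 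Provided this convention is applied identically in the hypothesis and in (\ref{cond1}), the identification is term by term. I would state this convention explicitly and check it at the boundary nodes. Apart from that, the proof is bookkeeping.
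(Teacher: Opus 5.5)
Your route is the paper's. You test the hypothesis on the bubble function at $c_{i,j}$ and the hat function at $t_i$. The paper phrases this as splitting $\bar y=y+y_C$ and invoking independence of $Y$ and $Y_C$; reading off coefficients of a linear form in the free nodal values is the cleaner version. The gap is in the step you flag as delicate, and it is larger than you describe.

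Suppose $\frac{d^+}{dt}$ is the literal forward derivative of the piecewise polynomial $\bar y$. That is the reading you use when you say the hat function has forward derivative $\l'_{i,0}(t_i)$ at $t_i$. Then for $\w(1)\neq 0$ the $k$-th summand of $\int\cdot\,\Delta_{\w}t$ is not local: its term $\w(1)\,\bar g(t_{k+1})\,\frac{d^+}{dt}(\bar y)(t_{k+1})$ reads the polynomial of the \emph{next} cell. The consequences are as follows.
\begin{itemize}
\item For the hat at $t_i$ with $i\ge 2$, the summand $k=i-2$ survives. It contributes $\w(1)\,\bar g(t_{i-1})\,\l'_{i-1,m}(t_{i-1})$, which equals $\w(1)\bar g(t_{i-1})/h$ when $m=1$.
\item For the same hat, the $\w(1)$-terms of the summands $k=i-1$ and $k=i$ involve $\l'_{i,0}(t_i)$ and $0$ (when $i+1<N$), instead of $\l'_{i-1,m}(t_i)$ and $\l'_{i,0}(t_{i+1})$.
\item For the bubble at $c_{i,j}$ with $i\ge 1$, the summand $k=i-1$ contributes $\w(1)\,\bar g(t_i)\,\l'_{i,j}(t_i)$. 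The $\w(1)$-term of the summand $k=i$ is $0$ rather than $\w(1)\,\bar g(t_{i+1})\,\l'_{i,j}(t_{i+1})$.
\end{itemize}
So both of your ``only these summands survive'' claims fail.

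No convention on the conclusion side can absorb this. (\ref{cond2}) is written with the ordinary derivative $\l'_{i,j}$. A term coming from $[t_{i-2},t_{i-1}]$ cannot be part of $\int_{t_{i-1}}^{t_i}$. And $\l_{i-1,m}$ extended by zero jumps from $1$ to $0$ at $t_i$, so it has no forward derivative there. Note also that for $k=N-1$ the literal reading needs $\frac{d^+}{dt}(\bar y)(b)$, which does not exist. So the hypothesis is not even well defined.

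The repair has to be made in the hypothesis, i.e.\ in how the integrand is evaluated cell by cell. You have two options.
\begin{itemize}
\item Assume $\w(1)=0$. Then every evaluation point of the $k$-th summand lies in $[t_k,t_{k+1})$, where the forward derivative of $\bar y$ equals the derivative of its polynomial piece.
\item Adopt the element-wise convention that the paper uses tacitly, and that underlies the Galerkin discrete Lagrangians of Marsden--West. The $k$-th summand is the quadrature of $\bar f\,p_k+\bar g\,p_k'$, where $p_k$ is the polynomial coinciding with $\bar y$ on $[t_k,t_{k+1}]$. At $t_{k+1}$ one uses the left derivative $p_k'(t_{k+1})$.
\end{itemize}
Under either option, each summand depends only on the values of $\bar Y$ at $t_k$, $t_{k+1}$ and $c_{k,1},\dots,c_{k,m-1}$, and your locality claims become true. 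Then (\ref{cond1}) and (\ref{cond2}) follow with $\frac{d^+}{dt}(\l_{i-1,m})$ and $\frac{d^+}{dt}(\l_{i,0})$ read as ordinary derivatives of these polynomials. Your closing paragraph instead adjusts the meaning of (\ref{cond1}) to match the forward derivative of the hat function. It should be replaced by this convention, stated up front.
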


The proof is given in Section \ref{proofdubois}.

\section{High order discrete calculus of variations}

The previous formalism can be used to define a high-order discrete analogue of continuous Lagrangian functionals. We consider a Lagrangian functional of the form 
\begin{equation}
    \L (x) := \di\int_a^b L (x(t) ,\dot{x} (t) )\, dt ,
\end{equation}
with $x\in C^1 ([a,b],\R )$ and $L$ a real valued $C^2$ function defined over $\R \times \R$. \\

Let $(\Q ,\w )$ be a quadrature. For all $\bar{X} \in C(\bar{\T} ,\R )$, we define the {\it discrete $(\Q ,\w)$-Lagrangian functional of order $m$} as 
\begin{equation}
\label{dislag}
    \L (\bar{X} ) :=\di\int_a^b L (\bar{x} ,\di\frac{d^+}{dt} (\bar{x} )) \, \Delta_{\w} t 
\end{equation}

In the case $m=1$ and $q=1$, meaning without control points and quadrature points, we recover classical variational integrators depending on the quadrature which is chosen. In particular,  taking $w (0)=1$ and $w (1 )=0$, the functional reduces to the classical discrete Lagrangian functional 
\begin{equation}
\L ( X ) =h\di\sum_{i=0}^{N-1} L (X (t_i) ,\Delta_1 (X) (t_i) )  ,
\end{equation}
and for $w (0)=w (1)=1/2$, we obtain the mid-point discrete Lagrangian functional
\begin{equation}
\L (X)=
	 h\sum_{i=0}^{N-1}L\left(\frac{X(t_i)+X (t_{i+1})}{2},\Delta_1 (X) (t_i)\right) .
\end{equation}

The Fr\'echet derivative of  $\L$ at $\bar{X}$ along a  direction $\bar{Y} \in C_{(0)} (\bar{\T} ,\R )$ is defined  by
\begin{equation}
D\L (\bar{X} ) (\bar{Y})=\di\lim_{\epsilon\to 0}\frac{1}{\epsilon} (\L (\bar{X} +\epsilon \bar{Y} ) -\L (\bar{X} ) ) .
\end{equation}

A {\it critical point} $\bar{X}$ of $\L$ satisfies $D\L (\bar{X} ) (\bar{Y}) =0$ for all $\bar{Y} \in C_{(0)} (\bar{\T} ,\R )$. \\

As in the classical continuous case (see \cite{arnold}), we can characterize the critical points:

\begin{theorem}[Galerkin variational integrators]
\label{main}
Let $(\Q ,\w)$ be a quadrature. The critical points $\bar{X}\in C(\T_C ,\R )$ of the discrete $(\Q,\w)$-Lagrangian functional of order $m$ defined by \eqref{dislag} are solutions for all $i=1,\dots ,N-1$ of the following set of equations:
\begin{equation}
\label{euler1}
\left .
\begin{array}{l}
    \di\int_{t_{i-1}}^{t_i} \left ( 
    \di\frac{\partial L}{\partial x} (\star_t ) \l_{i-1,m} (t)  
    +\di\frac{\partial L}{\partial v} (\star_t ) \di\frac{d^+}{dt} (\l_{i-1 ,m}) (t)
    \right ) \, \Delta_{\w} t \\
    +
    \di\int_{t_i}^{t_{i+1}} \left ( 
    \di\frac{\partial L}{\partial x} (\star_t ) \l_{i,0} (t)  
    +\di\frac{\partial L}{\partial v} (\star_t ) \di\frac{d^+}{dt} (\l_{i,0}) (t)
    \right ) \, \Delta_{\w} t 
    =0 .
\end{array}
\right .
\end{equation}
and 
\begin{equation}
\label{euler2}
    \di\int_{t_i}^{t_{i+1}} \left ( 
    \di\frac{\partial L}{\partial x} (\star_t ) \l_{i,j} (t)  
    +\di\frac{\partial L}{\partial v} (\star_t ) \l'_{i,j} (t)  
    \right ) \, \Delta_{\w} t =0 ,
\end{equation}
for $j=1,\dots ,m-1$, where $\star_t :=(\bar{x}, \di\frac{d^+}{dt} (\bar{x} ))$ and $\l'_{i,j}$ is the derivative of $\l_{i,j}$.
\end{theorem}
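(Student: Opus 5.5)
The plan is to compute the Fréchet derivative $D\L(\bar{X})(\bar{Y})$ explicitly, put it in the form required by the discrete Du Bois-Reymond Lemma~\ref{dubois}, and read off \eqref{euler1}--\eqref{euler2} from \eqref{cond1}--\eqref{cond2}.

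\textbf{Step 1: linearity.} Fix $\bar{X}$ and a direction $\bar{Y}\in C_{(0)}(\bar{\T}_{\C},\R)$, and set $\bar{y}=\emb_m(\bar{Y})$. By the linearity \eqref{lineinter} of $\emb_m$, the embedding of $\bar{X}+\epsilon\bar{Y}$ is $\bar{x}+\epsilon\bar{y}$. Since $d^+/dt$ is linear, its right derivative is $\frac{d^+}{dt}\bar{x}+\epsilon\frac{d^+}{dt}\bar{y}$.

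\textbf{Step 2: differentiate under the sum.} The anti-derivative $\int_a^b\cdot\,\Delta_{\w}t$ is, by definition, a \emph{finite} weighted sum of point values at the nodes of $\bar{\T}_{\Q}$. Hence
\[
\frac{1}{\epsilon}\bigl(\L(\bar{X}+\epsilon\bar{Y})-\L(\bar{X})\bigr)
\]
is a finite sum of difference quotients of the $C^2$ function $L$. Passing to the limit termwise with the chain rule gives
\[
D\L(\bar{X})(\bar{Y})=\int_a^b\Bigl(\frac{\partial L}{\partial x}(\star_t)\,\bar{y}(t)+\frac{\partial L}{\partial v}(\star_t)\,\frac{d^+}{dt}(\bar{y})(t)\Bigr)\,\Delta_{\w}t .
\]
No integration by parts is needed, because the lemma is stated in exactly this form.

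\textbf{Step 3: apply Lemma~\ref{dubois}.} Define $\bar{f}(t)=\frac{\partial L}{\partial x}(\star_t)$ and $\bar{g}(t)=\frac{\partial L}{\partial v}(\star_t)$ on $\bar{\T}_{\Q}$. These are well-defined functions on the quadrature time-scale, since $\bar{x}$ and $\frac{d^+}{dt}\bar{x}$ are defined on all of $[a,b]$ (on $[t_i,t_{i+1}[$ for the right derivative). A critical point satisfies $D\L(\bar{X})(\bar{Y})=0$ for all $\bar{Y}\in C_{(0)}$, which is precisely the hypothesis of the lemma. Conditions \eqref{cond1} and \eqref{cond2}, with $\bar{f}$ and $\bar{g}$ substituted, are then literally \eqref{euler1} and \eqref{euler2}.

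\textbf{Main obstacle.} The only delicate point is checking that $\bar{f}$ and $\bar{g}$ are legitimate elements of $C(\bar{\T}_{\Q},\R)$. At the right endpoints $t_{i+1}$ of each cell, the quadrature uses the value $f(t_{k+1})$. One must agree that $d^+/dt$ there is interpreted cellwise, as the derivative of the polynomial on $[t_k,t_{k+1}]$, consistent with the cellwise integrals in \eqref{cond1}. If this is not already implicit in the notation, I would state it as the convention, so that the derivative computation in Step 2 holds node by node.

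Beyond that, the proof reduces to the lemma. The substantive work, namely testing with $\bar{Y}$ equal to the indicator of a single node of $\bar{\T}_{\C}$, so that $\bar{y}$ is the corresponding Lagrange polynomial $\l_{i,j}$ (or the hat-type combination $\l_{i-1,m}$ on the left cell and $\l_{i,0}$ on the right cell for a node $t_i$), is already contained in Lemma~\ref{dubois}.
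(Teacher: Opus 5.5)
Your proposal is correct and follows the paper's proof essentially step for step. Linearity of $\emb_m$ and of the anti-derivative, together with a first-order expansion of $L$, gives $D\L(\bar{X})(\bar{Y})$; your termwise limit over the finite quadrature sum is the same thing as the paper's Taylor expansion. Lemma~\ref{dubois} applied with $\bar{f}=\frac{\partial L}{\partial x}(\star_t)$ and $\bar{g}=\frac{\partial L}{\partial v}(\star_t)$ then gives the result. Your remark that $\frac{d^+}{dt}$ must be read cellwise at each right endpoint $t_{k+1}$ identifies a convention the paper leaves implicit, and adopting it is exactly what makes \eqref{cond1}--\eqref{cond2} coincide with \eqref{euler1}--\eqref{euler2}.
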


Equations \eqref{euler1} are called the $(\Q,\w )$-{\it discrete Euler-Lagrange equations of order $m$} and correspond to the definition of {\it Galerkin variational integrators} (see \cite{marsden,leok1,leok2,ober}. The $(m-1)$ equations \eqref{euler2} give the constraints of the values of the control function $X_C$. 

\begin{proof}
By linearity of the embedding map, we have 
\begin{equation}
    \L (\bar{X}+\epsilon \bar{Y} ) =\di\int_a^b 
    L(\bar{x} +\epsilon \bar{y} , \di\frac{d^+}{dt} (\bar{x} ) +\epsilon \di\frac{d^+}{dt} (\bar{y} ) ) \, \Delta_{\w} t .
\end{equation}
As $L$ is $C^2$ with respect to all the variables, using a Taylor expansion, we obtain 
\begin{equation}
    L(\bar{x} +\epsilon \bar{y} , \di\frac{d^+}{dt} (\bar{x} ) +\epsilon \di\frac{d^+}{dt} (\bar{y} ) ) =L (\star_t ) ) +
    \epsilon \left ( 
    \di\frac{\partial L}{\partial x} (\star_t ) \bar{y} 
    +\di\frac{\partial L}{\partial v} (\star_t ) \di\frac{d^+}{dt} (\bar{y}) 
    \right ) 
    +o(\epsilon ) ,
\end{equation}

We then obtain, replacing in the functional and using the linearity of the $m$-th order anti-derivative,
\begin{equation}
    \L (\bar{X}+\epsilon \bar{Y} ) =\L (\bar{X} ) +\epsilon 
    \di\int_a^b \left ( 
    \di\frac{\partial L}{\partial x} (\star_t ) \bar{y} 
    +\di\frac{\partial L}{\partial v} (\star_t ) \di\frac{d^+}{dt} (\bar{y}) 
    \right ) \, \Delta_{\w} t +o (\epsilon ) . 
\end{equation}

As a consequence, we have 
\begin{equation}
    D\L (\bar{X} ) (\bar{Y} ) = \di\int_a^b \left ( 
    \di\frac{\partial L}{\partial x} (\star_t ) \bar{y} 
    +\di\frac{\partial L}{\partial v} (\star_t ) \di\frac{d^+}{dt} (\bar{y}) 
    \right ) \, \Delta_{\w} t . 
\end{equation}
A critical point $\bar{X}$ then satisfies for all $\bar{Y}\in C_{(0)} (\bar{\T} ,\R )$ the equality
\begin{equation}
    \di\int_a^b \left ( 
    \di\frac{\partial L}{\partial x} (\star_t ) \bar{y} 
    +\di\frac{\partial L}{\partial v} (\star_t ) \di\frac{d^+}{dt} (\bar{y}) 
    \right ) \, \Delta_{\w} t =0. 
\end{equation}
We conclude the proof using the discrete Du Bois-Reymond lemma \ref{dubois}.
\end{proof}

\section{Proof of the discrete Du Bois-Reymond lemma}
\label{proofdubois}
Using the decomposition \eqref{decomp}, we have $\bar{y} =y +y_{C}$ which leads to 
\begin{equation}
    \di\int_a^b \left ( 
    \bar{f}(t) y(t) 
    +\bar{g}(t) \di\frac{d^+}{dt} (y)(t) 
    \right ) \, \Delta_{\w } t 
    +
    \di\int_a^b \left ( 
    \bar{f}(t) y_C (t) 
    +\bar{g}(t) \di\frac{d^+}{dt} (y_C ) 
    \right ) \, \Delta_{\w} t =0 .
\end{equation}
By the independence of the variations $Y$ and $Y_C$, we then obtain 
\begin{equation}
\label{cont1}
    \di\int_a^b \left ( 
    \bar{f}(t) y(t) 
    +\bar{g}(t) \di\frac{d^+}{dt} (y)(t) 
    \right ) \, \Delta_{\w} t =0,
\end{equation}
for all $Y$ satisfying $Y(a)=Y(b)=0$ and 
\begin{equation}
\label{cont2}
    \di\int_a^b \left ( 
    \bar{f}(t) y_C (t) 
    +\bar{g}(t) \di\frac{d^+}{dt} (y_C ) 
    \right ) \, \Delta_{\w} t =0 ,
\end{equation}
for all $Y_C \in C((\T )_C ,\R )$.\\

For all $i\in \{1,\dots ,N-1\}$, and $j\in \{ 1,\dots ,m-1\}$, we choose a variation $\bar{Y}$ such that $Y=0$ and $Y_C (t)=0$ for all $t\in (\T )_C \setminus \{ c_{i,j} \}$ and $Y_C (c_{i,j} )\not=0$. The constraint \eqref{cont1} is satisfied and constraint \eqref{cont2} implies 
\begin{equation}
Y_C (c_{i,j} )   \left (  \di\int_{t_i}^{t_{i+1}} \left ( 
    \bar{f}(t) \l_{i,j} (t)  
    +\bar{g}(t) \l'_{i,j} (t)  
    \right ) \, \Delta_{\w} t \right ) =0 .
\end{equation}
Indeed, $y=0$ and $y_C$ is only non zero on $]t_i ,t_{i+1} [$ with $y_C (t)=Y_C (c_{i,j} ) l_{i,j} (t)$. As $Y_C (c_{i,j} )\not=0$, this implies \eqref{cond2}. \\

In the same way, for $i\in \{1,\dots ,N-1\}$, let $\bar{Y}$ be such that $Y_C =0$ on $(\T )_C$ and $Y(t)=0$ for all $t\in \T \setminus \{t_i\}$ and $Y(t_i )\not= 0$. The constraint \eqref{cont2} is directly satisfied and \eqref{cont1} gives
\begin{equation}
Y(t_i ) \left ( 
\left .
\begin{array}{l}
    \di\int_{t_{i-1}}^{t_i} \left ( 
    \bar{f} (t) \l_{i-1,m} (t)  
    +\bar{g} (t) \di\frac{d^+}{dt} (\l_{i-1 ,m}) 
    \right ) \, \Delta_{\w} t \\
    +
    \di\int_{t_i}^{t_{i+1}} \left ( 
    \bar{f} (t) \l_{i,0} (t)  
    +\bar{g}(t) \di\frac{d^+}{dt} (\l_{i,0}) 
    \right ) \, \Delta_{\w} t 
    =0
\end{array}
\right.
\right ) =0.
\end{equation}
Indeed, we have $y_C =0$ and $y(t)$ is equal to $Y(t_i) \l_{i,l} (t)$ for all $t\in [t_i ,t_{i+1}]$ and to $Y(t_i ) \l_{i,0} (t)$ for all $t\in [t_i ,t_{i+1} ]$. As $Y(t_i)\not=0$, this implies \eqref{cond1}. This concludes the proof.

\section{Conclusion}

The High-order differential and integral calculus allows us to give a presentation of Galerkin variational integrators in the framework of discrete embeddings. Following our previous work on the embedding of Lagrangian and Hamiltonian partial differential equations (PDEs) \cite{cgp}, we would like to extend this formalism to cover high-order methods for PDEs.


\begin{thebibliography}{15}
\bibitem{arnold}
V.~I. Arnold.
\newblock {\em Mathematical methods of classical mechanics}, volume~60 of {\em
  Graduate Texts in Mathematics}.
\newblock Springer-Verlag, New York, 1979.

\bibitem{cr1}
	J.~Cresson.
	\newblock {Introduction to embedding of Lagrangian systems}.
	\newblock {\em International Journal of Biomathematics and Biostatistics},
	1(1):23--31, 2010.
	
\bibitem{cr2}
J.~Cresson.
\newblock {Continuous versus discrete structures I - Discrete embeddings of ordinary differential equations and discrete Lagrangian systems}.
\newblock {\em Proceedings of the Institute of Mathematics and Mechanics}, 2025.
		
\bibitem{cgp}
J. Cresson, I. Greff, C. Pierre, Discrete embeddings for Lagrangian and Hamiltonian systems, Acta Math. Vietnam. 43 (2018), no. 3, 391-413

\bibitem{rouba} J. Cresson, R. Safi, Discrete embedding of Lagrangian/Hamiltonian systems and the Marsden-West approach to variational integrators-the order one case, Monografias matematicas "Garcia de galdeano" no. 43, p. 107-118, 2024.

\bibitem{rouba2} J. Cresson, R. Safi, Continuous versus discrete structures IV - Mid-point embedding of Hamiltonian systems and the Wendlandt-Marsden mechanical integrators, Arxiv arXiv:2211.16144. 

\bibitem{gelfand}
S.~V.~Fomin, I.~M.~Gelfand.
\newblock {\em Calculus of Variations}.
\newblock Dover Books on Mathematics. Dover, 2000.

\bibitem{hairer}
	E.~Hairer, C.~Lubich, and G.~Wanner.
	\newblock {\em {G}eometric numerical integration: structure-preserving
		algorithms for ordinary differential equations}, volume~31.
	\newblock Springer Science \& Business Media, 2006.

\bibitem{leok1} M. Leok, Generalized Galerkin Variational Integrators, arXiv : math/0508360. (2005).

\bibitem{leok2} M. Leok, T. Shingel, General techniques for constructing variational integrators. Frontiers of Mathematics in China, 7(2), 273-303, (2012).

\bibitem{marsden}
J.E. Marsden and M.~West.
\newblock {D}iscrete mechanics and variational integrators.
\newblock {\em Acta Numerica 2001}, 10:357--514, 2001.

\bibitem{ober} S. Ober-Bl\"obaum, M. Vermeeren, Superconvergence of Galerkin variational integrators, IFAC-PapersOnLine, Volume 54, Issue 19, 2021, Pages 327-333.
\end{thebibliography}
\end{document}